\documentclass[11pt]{article}
\usepackage{amssymb, amsmath, hyperref, array}

\setlength{\headheight}{-0.5cm} \setlength{\headsep}{0cm}
\setlength{\oddsidemargin}{-0.3cm}
\setlength{\evensidemargin}{-0.5cm} \setlength{\textheight}{9.0in}
\setlength{\textwidth}{18.0cm}

\def\C{\mathbb{C}}

\def\N{\mathbb{N}}

\newtheorem{defn}{Definition}

\newtheorem{lemma}[defn]{Lemma}

\newtheorem{theorem}[defn]{Theorem}

\newtheorem{remark}[defn]{Remark}

           {\vspace{3.3mm}
           \noindent{\bf #1}\it}%
           {\vspace{3.3mm}}

\newenvironment{proof}[1]{
  \trivlist \item[\hskip \labelsep{\it #1}]}{\hfill\mbox{$\square$}
  \endtrivlist}

\def\sD { {\rm  sDisc}}

\parindent = 0cm
\parskip = 0.1cm

\title{On the Davenport-Mahler bound}

\author{Paula Escorcielo\footnote
{Partially supported by the Argentinian grant UBACYT 20020120100133.} 
\qquad
Daniel Perrucci$^{*}$\footnote
{Partially supported by the Argentinian  grant 
PIP 2014-2016 11220130100527CO CONICET.} 
\\[3mm]
{\small Departamento de Matem\'atica, FCEN, Universidad de Buenos Aires, Argentina}\\ 
{\small  IMAS, CONICET--UBA, Argentina}
}

\begin{document}

\maketitle

\begin{abstract}
We prove that the Davenport-Mahler bound holds for arbitrary graphs with vertices on the set of roots of a given 
univariate polynomial with complex coefficients.  
\end{abstract}

\section*{Introduction}

The Davenport-Mahler bound is a 
lower bound for the product of the lengths of the edges on a graph whose vertices
are the complex roots of a given univariate polynomial $P \in \C[X]$, under certain assumptions. 
Its origins are the work of Mahler (\cite{Mahler}), where a lower bound for the minimum separation between 
two roots of $P$ in terms of the discriminant of $P$ is given, and the work of Davenport (see 
\cite[Proposition 8]{Davenport}), where for the first time
a lower bound for the joint product of many different distances between roots of $P$ (which is not simply the product
of a lower bound for each distance) is obtained. Roughly speaking, this bound makes evident an interaction between the 
involved distances, in the sense that if some of them are very small, the rest cannot be that small.

Throughout the literature, there are different versions of this bound. 
We include here the one from \cite[Theorem 3.1]{ESY} (see also  \cite{John, Yap}). We refer the reader to 
\cite{MigSte} for the definition of discriminant and Mahler measure.

\begin{theorem}[Davenport-Mahler bound]\label{th:DM}
Let $P \in \C[X]$ be a polynomial of degree $d$.
Let $G=(V,E)$ be a directed graph whose vertices $\{v_1, \dots, v_k\}$ are a subset of the roots of $P$ 
such that: 
\begin{enumerate}
 \item if $(v_i, v_j) \in E$, then $|v_i| \le |v_j|$,
 \item $G$ is acyclic,
 \item the in-degree of any vertex is at most 1. 
\end{enumerate}
Then
$$
\prod_{(v_i, v_j) \in E} |v_i - v_j| \ \ge \ |{\rm Disc}(P)|^{1/2} \, {\rm M}(P)^{-(d-1)} \, \Big(\frac{d}{\sqrt{3}}\Big)^{-\#E} \,d^{-d/2},
$$
where 
${\rm Disc}(P)$ and ${\rm M}(P)$ are the discriminant and the Mahler measure of $P$. 
\end{theorem}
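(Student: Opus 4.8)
The plan is to first reduce to the case where $P$ is monic with $d$ pairwise distinct roots. If $P$ has a multiple root then ${\rm Disc}(P)=0$, the right-hand side vanishes and the inequality is trivial; and dividing $P$ by its leading coefficient alters neither the roots (hence the left-hand side) nor the quantities $|{\rm Disc}(P)|^{1/2}{\rm M}(P)^{-(d-1)}$, $d$, $\#E$, so we may assume $P$ monic. Let $\alpha_1,\dots,\alpha_d$ be the distinct roots and $V=(\alpha_i^{\,j})_{1\le i\le d,\ 0\le j\le d-1}$ their Vandermonde matrix. Then $|{\rm Disc}(P)|^{1/2}=|\det V|$ and ${\rm M}(P)=\prod_{i=1}^{d}\max(1,|\alpha_i|)$, so the statement is equivalent to the upper bound
$$
|\det V|\ \le\ \Big(\prod_{(v_i,v_j)\in E}|v_i-v_j|\Big)\,\Big(\tfrac{d}{\sqrt3}\Big)^{\#E}\,d^{d/2}\,{\rm M}(P)^{d-1}.
$$

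The next step is to extract the edge factors from $\det V$ by row operations. Since the in-degree of each vertex is at most $1$, the edges of $G$ are in bijection with their heads; for a head $v_j$ call the tail $v_i$ of its unique incoming edge the parent of $v_j$, so that hypothesis~(1) gives $|v_i|\le|v_j|$, and since $G$ is acyclic the parent relation defines a forest and $G$ admits a topological order of its vertices. Processing the rows indexed by heads in reverse topological order, replace the row of each head $v_j$ by its difference with the (still unmodified, because $v_i$ comes earlier in the topological order) row of its parent $v_i$: this leaves $\det V$ unchanged by multilinearity and the usual equal-rows argument, and the new $v_j$-row, whose entry in column $\ell$ equals $(\alpha_{v_j}^{\,\ell}-\alpha_{v_i}^{\,\ell})/(\alpha_{v_j}-\alpha_{v_i})=\sum_{s=0}^{\ell-1}\alpha_{v_i}^{\,s}\alpha_{v_j}^{\,\ell-1-s}$, is divisible by $\alpha_{v_j}-\alpha_{v_i}$. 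Factoring out one such divided difference per edge yields
$$
|\det V|\ =\ \Big(\prod_{(v_i,v_j)\in E}|v_i-v_j|\Big)\,\big|\det\widetilde V\big|,
$$
where $\widetilde V$ agrees with $V$ outside the $\#E$ rows indexed by heads, which now carry the divided-difference rows just described.

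It remains to bound $|\det\widetilde V|$ by Hadamard's inequality. An unmodified Vandermonde row attached to a root $\alpha$ has Euclidean norm at most $\sqrt d\,\max(1,|\alpha|)^{d-1}$. For a modified row attached to a head $v_j$ with parent $v_i$ the hypothesis $|v_i|\le|v_j|$ is decisive: each entry $\sum_{s=0}^{\ell-1}\alpha_{v_i}^{\,s}\alpha_{v_j}^{\,\ell-1-s}$ has modulus at most $\ell\,\max(1,|v_j|)^{\ell-1}$, so with $\sum_{\ell=1}^{d-1}\ell^2\le d^3/3$ the norm of that row is at most $(d^{3/2}/\sqrt3)\,\max(1,|v_j|)^{d-2}$. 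Taking the product of these bounds over all $d$ rows of $\widetilde V$: each head $v_j$ contributes $\max(1,|v_j|)^{d-2}$ and every other root contributes $\max(1,|\alpha|)^{d-1}$, so, the heads being pairwise distinct, the product of all these powers is at most ${\rm M}(P)^{d-1}$, while the numerical factors combine into $d^{d/2}(d/\sqrt3)^{\#E}$. This gives $|\det\widetilde V|\le d^{d/2}(d/\sqrt3)^{\#E}{\rm M}(P)^{d-1}$, which together with the displayed identity is precisely the claim. The delicate point is the bookkeeping in the row-reduction step — one has to check simultaneously that processing in reverse topological order keeps every factorization $\alpha_{v_j}-\alpha_{v_i}$ legitimate and the determinant invariant, and this is exactly where acyclicity and the in-degree hypothesis are used; everything afterwards is Hadamard's inequality together with elementary estimates. (For $\#E=1$ this reduces to Mahler's root-separation bound, a useful sanity check.)
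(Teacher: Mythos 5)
Your proof is correct and follows essentially the same route as the paper's argument (which proves the general Theorem \ref{th:main} and contains Theorem \ref{th:DM} as the in-degree-at-most-one case): you extract the edge lengths from the Vandermonde determinant by replacing each head's row with a first-order divided-difference row --- exactly the paper's construction specialized to $d_j\le 1$ --- and then bound the modified determinant by Hadamard's inequality, your estimate $\sum_{\ell=1}^{d-1}\ell^2\le d^3/3$ being the $d_j=1$ instance of the paper's Lemma \ref{aux_lemm}. The only cosmetic difference is that you schedule the row operations by a reverse topological order of $G$ whereas the paper orders the roots by modulus and processes rows by decreasing index; both devices serve the same bookkeeping purpose of keeping the parent rows unmodified when they are subtracted.
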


Note that when $P$ is not a square-free polynomial, the bound becomes trivial since ${\rm Disc}(P)$ vanishes. 
This situation has been managed by Eigenwillig (\cite[Theorem 3.9]{eigen})
through the use of subdiscriminants (see \cite[Section 4.2]{BPR}), obtaining a generalized
version of the Davenport-Mahler bound, as follows: 

\begin{theorem}[Generalized Davenport-Mahler bound]\label{th:GDM}
Let $P \in \C[X]$ be a polynomial of degree $d$ with exactly $r$ distinct complex roots.
Let $G=(V,E)$ be a directed graph whose vertices $\{v_1, \dots, v_k\}$ are a subset of the roots of $P$ 
such that: 
\begin{enumerate}
 \item if $(v_i, v_j) \in E$, then $|v_i| \le |v_j|$,
 \item $G$ is acyclic,
 \item the in-degree of any vertex is at most 1. 
\end{enumerate}
Then
$$
\prod_{(v_i, v_j) \in E} |v_i - v_j| \ \ge \ |{\rm sDisc}_{d-r}(P)|^{1/2} \, {\rm M}(P)^{-(r-1)} \, \Big(\frac{r}{\sqrt{3}}\Big)^{-\#E} \,r^{-r/2} \, 
\Big(\frac13\Big)^{\min \{d, 2d-2r\}/6}.
$$
\end{theorem}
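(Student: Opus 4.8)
The plan is to deduce the statement from Theorem~\ref{th:DM} applied to the squarefree part of $P$, the only cost being an elementary inequality on the multiplicities. Write $P=a_d\prod_{\ell=1}^{r}(X-\beta_\ell)^{m_\ell}$ with $\beta_1,\dots,\beta_r$ the distinct roots, $m_1,\dots,m_r\ge 1$, $\sum_\ell m_\ell=d$, and put $\bar P:=a_d\prod_{\ell=1}^{r}(X-\beta_\ell)$, a squarefree polynomial of degree $r$ with leading coefficient $a_d$. First I would record the closed form of the first nonzero subdiscriminant (see \cite[Section~4.2]{BPR}): since the monic $\gcd(P,P')$ equals $\prod_\ell(X-\beta_\ell)^{m_\ell-1}$, one has $P/\gcd(P,P')=a_d\prod_\ell(X-\beta_\ell)$ and $P'/\gcd(P,P')=a_d\sum_\ell m_\ell\prod_{\ell'\ne\ell}(X-\beta_{\ell'})$; evaluating the resultant of these two by plugging the roots $\beta_\ell$ of the first into the second (only the $\ell$-th summand, $a_d m_\ell\prod_{\ell'\ne\ell}(\beta_\ell-\beta_{\ell'})$, survives) and using the standard relation between that resultant and ${\rm sDisc}_{d-r}$ which generalizes ${\rm Disc}(P)=\pm {\rm Res}(P,P')/a_d$, one gets
$$
{\rm sDisc}_{d-r}(P)=\pm\, a_d^{\,2r-2}\Big(\prod_{\ell=1}^{r}m_\ell\Big)\prod_{1\le\ell<\ell'\le r}(\beta_\ell-\beta_{\ell'})^2 ,
$$
so that $|{\rm sDisc}_{d-r}(P)|^{1/2}=\big(\prod_\ell m_\ell\big)^{1/2}\,|{\rm Disc}(\bar P)|^{1/2}$; for $r=d$ this is just ${\rm sDisc}_0(P)={\rm Disc}(P)$.

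Next, the vertices $\{v_1,\dots,v_k\}$ of $G$ are distinct roots of $P$, hence roots of $\bar P$, and $G$ still satisfies hypotheses (1)--(3); so Theorem~\ref{th:DM} applied to $\bar P$ (a polynomial of degree $r$) and $G$ gives
$$
\prod_{(v_i,v_j)\in E}|v_i-v_j|\ \ge\ |{\rm Disc}(\bar P)|^{1/2}\,{\rm M}(\bar P)^{-(r-1)}\Big(\frac{r}{\sqrt3}\Big)^{-\#E}r^{-r/2}.
$$
Since ${\rm M}(P)=|a_d|\prod_\ell\max(1,|\beta_\ell|)^{m_\ell}\ge |a_d|\prod_\ell\max(1,|\beta_\ell|)={\rm M}(\bar P)$ and $r-1\ge 0$, we may weaken ${\rm M}(\bar P)^{-(r-1)}$ to ${\rm M}(P)^{-(r-1)}$ and substitute $|{\rm Disc}(\bar P)|^{1/2}=\big(\prod_\ell m_\ell\big)^{-1/2}|{\rm sDisc}_{d-r}(P)|^{1/2}$, obtaining
$$
\prod_{(v_i,v_j)\in E}|v_i-v_j|\ \ge\ \Big(\prod_{\ell}m_\ell\Big)^{-1/2}|{\rm sDisc}_{d-r}(P)|^{1/2}\,{\rm M}(P)^{-(r-1)}\Big(\frac{r}{\sqrt3}\Big)^{-\#E}r^{-r/2}.
$$
Comparing with the asserted bound, it only remains to check $\big(\prod_\ell m_\ell\big)^{-1/2}\ge (1/3)^{\min\{d,\,2d-2r\}/6}$, i.e. $\prod_\ell m_\ell\le 3^{\min\{d,\,2d-2r\}/3}$.

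I expect this last inequality to be the only genuinely new point, and it is elementary. Taking logarithms and using $\sum_\ell m_\ell=d$ and $d-r=\sum_\ell(m_\ell-1)$, it suffices to prove the two per-term estimates $\ln m\le \tfrac{\ln 3}{3}\,m$ and $\ln m\le \tfrac{2\ln 3}{3}(m-1)$ for every integer $m\ge 1$: the first because $t\mapsto(\ln t)/t$ is maximized over $\Z_{\ge 1}$ at $t=3$ (value $(\ln 3)/3$, exceeding $(\ln 2)/2$), the second because $\tfrac{2\ln 3}{3}(m-1)-\ln m$ vanishes at $m=1$ and has positive first differences $\tfrac{2\ln 3}{3}-\ln\frac{m+1}{m}>0$. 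Summing over $\ell$ gives $\prod_\ell m_\ell\le 3^{d/3}$ and $\prod_\ell m_\ell\le 3^{2(d-r)/3}$, hence $\prod_\ell m_\ell\le 3^{\min\{d,\,2d-2r\}/3}$, which closes the argument. The single delicate point is to use the correct form of ${\rm sDisc}_{d-r}(P)$ — carrying only the factor $\prod_\ell m_\ell$ and the plain square $\prod_{\ell<\ell'}(\beta_\ell-\beta_{\ell'})^2$ of the Vandermonde of the \emph{distinct} roots (not $\prod_\ell m_\ell^{m_\ell}$ or higher powers of the root differences) — since this is exactly what makes the passage from $P$ to $\bar P$ lossless apart from the constant $\prod_\ell m_\ell$.
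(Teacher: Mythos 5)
Your argument is correct, but it takes a genuinely different route from the paper. The paper never proves Theorem~\ref{th:GDM} in isolation: it quotes it from Eigenwillig and then subsumes it in Theorem~\ref{th:main}, whose proof reruns the whole Davenport--Mahler machinery directly on the distinct roots --- Vandermonde matrix, row replacements by divided differences, Hadamard's inequality --- together with the root expression for $|{\rm sDisc}_{d-r}(P)|^{1/2}$ (equation~(\ref{ineq_1})) and Lemma~\ref{borrowed_lemma}. You instead deduce the statement from the classical bound (Theorem~\ref{th:DM}) applied to the squarefree part $\bar P$: the identity ${\rm sDisc}_{d-r}(P)=\pm\, a_d^{2r-2}\big(\prod_\ell m_\ell\big)\prod_{\ell<\ell'}(\beta_\ell-\beta_{\ell'})^2$ converts $|{\rm Disc}(\bar P)|^{1/2}$ into $\big(\prod_\ell m_\ell\big)^{-1/2}|{\rm sDisc}_{d-r}(P)|^{1/2}$, the inequality ${\rm M}(\bar P)\le{\rm M}(P)$ handles the Mahler factor, and $\prod_\ell m_\ell\le 3^{\min\{d,2d-2r\}/3}$ --- exactly the paper's Lemma~\ref{borrowed_lemma}, for which you supply a correct elementary proof instead of a citation --- yields the last factor; the corner cases ($r=1$, $\#E=0$) are inherited from Theorem~\ref{th:DM}. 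One presentational caveat: your subdiscriminant formula is right (it is precisely equation~(\ref{ineq_1}) squared, cited by the paper from BPR, Section~4.2), but the cleanest justification is the root expression of ${\rm sDisc}_{d-r}$ as a sum over $r$-element subsets of roots, rather than the somewhat informal appeal to a ``standard relation'' with ${\rm Res}(P/\gcd(P,P'),P'/\gcd(P,P'))$, which you do not actually prove. As for what each approach buys: your reduction is short and shows the generalized bound is a formal consequence of the classical one plus a single combinatorial inequality on multiplicities; the paper's direct proof is heavier, but it is exactly what allows the hypotheses 1--3 to be dropped altogether (arbitrary graphs, Theorem~\ref{th:main}), which a black-box use of Theorem~\ref{th:DM} cannot deliver.
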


It is clear that if $P$ is a square-free polynomial, then $r = d$ and
the bound by Eigenwillig is exactly the classical Davenport-Mahler bound. 

One of the main applications of the Davenport-Mahler bound in both its classical and generalized version 
is its use in algorithmic complexity estimation as for instance in \cite{DuShYap, ESY, KerSag}.

The main result in this paper is that the Generalized Davenport-Mahler bound holds for arbitrary graphs
(undirected, no loops, no multiple edges)
with 
vertices on the set of roots of $P$. 
More precisely:

\begin{theorem}\label{th:main}
Let $P \in \C[X]$ be a polynomial of degree $d$ with exactly $r$ distinct complex roots.
Let $G=(V,E)$ be a graph whose vertices $\{v_1, \dots, v_k\}$ are a subset of the roots of $P$.
Then
$$
\prod_{(v_i, v_j) \in E} |v_i - v_j| \ \ge \ 
|{\rm sDisc}_{d-r}(P)|^{1/2} \, {\rm M}(P)^{-(r-1)} \, \Big(\frac{r}{\sqrt{3}}\Big)^{-\#E} \,r^{-r/2} \, 
\Big(\frac13\Big)^{\min \{d, 2d-2r\}/6}.
$$
\end{theorem}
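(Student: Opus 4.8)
The plan is to reduce the case of an arbitrary graph to the case of a graph satisfying the three hypotheses of Theorem~\ref{th:GDM}, and then simply invoke that theorem. The key observation is that the right-hand side of the bound does not depend on the graph $G$ at all except through $\#E$, the number of edges. So it suffices to show that, given an arbitrary graph $G=(V,E)$ with vertices among the roots of $P$, one can produce a \emph{directed} graph $G'=(V',E')$ satisfying conditions (1)--(3) of Theorem~\ref{th:GDM}, with $\#E' = \#E$, and with
$$
\prod_{(v_i,v_j)\in E} |v_i - v_j| \ \ge \ \prod_{(v_i,v_j)\in E'} |v_i - v_j|.
$$
If we can do this, the result follows at once. (One should also dispose of trivial cases first: if $G$ has an edge between two equal roots, or between two roots that are distinct in $V$ but equal as complex numbers, the left-hand side is $0$; but in fact the hypothesis only says the vertices are \emph{a subset of the roots}, so presumably multiple equal vertices are not allowed, and a loop would give an edge $|v_i - v_i| = 0$ — I would clarify that $G$ has no loops, as stated parenthetically in the introduction, so every edge contributes a nonzero factor unless $P$ has repeated roots among the chosen vertices, in which case the left side may still be positive and we argue as below.)

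The main step is the construction of $G'$. First, orient each edge of $G$: for an edge $\{v_i,v_j\}$, direct it from the vertex of smaller modulus to the vertex of larger modulus, so that condition (1) holds; ties in modulus can be broken by any fixed total order on $V$. This makes $G$ into a directed acyclic graph, so condition (2) holds. The only remaining obstruction is condition (3): some vertex $v_j$ may have in-degree $m \ge 2$, i.e. there are distinct vertices $u_1,\dots,u_m$ with edges $(u_\ell, v_j)$. The idea is to \emph{redirect} these edges so as to decrease the in-degree while not increasing the product of edge lengths. Among $u_1,\dots,u_m$, pick the one of largest modulus — say $u_1$, with $|u_1| = \max_\ell |u_\ell|$. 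Replace each edge $(u_\ell, v_j)$ with $\ell \ge 2$ by the edge $(u_\ell, u_1)$. This preserves the number of edges and, since $|u_\ell| \le |u_1|$, the new edges still respect condition (1). The key inequality is that
$$
|u_\ell - u_1| \ \le \ |u_\ell - v_j| + |v_j - u_1| \quad\text{is the wrong direction;}
$$
instead one uses that $u_1$ lies in the disk $|z| \le |v_j|$ (wait — no). Let me restate: what one actually needs is $|u_\ell - u_1| \le 2|u_\ell - v_j|$ or some bound comparing the product before and after. This is exactly the kind of geometric lemma (a triangle-inequality / bounding-factor argument) that appears in the proofs of the Davenport-Mahler bound, and I expect the authors to isolate it as a preliminary lemma.

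The hard part will be making the redirection argument terminate and give a clean comparison of products. A single redirection step reduces the in-degree of $v_j$ by $m-1$ but may increase the in-degree of $u_1$; one must check that a suitable potential (for instance $\sum_v \max\{0, \mathrm{indeg}(v) - 1\}$, or a lexicographic measure) strictly decreases, so the process halts at a graph satisfying (3). Simultaneously, at each step the product of edge lengths must not increase, which forces a careful choice of which vertex to redirect towards — choosing the neighbor of \emph{largest} modulus is what makes the geometry work, because then all the other in-neighbors $u_\ell$ and the target $u_1$ all lie in the closed disk of radius $|v_j|$, and one can bound $|u_\ell - u_1|$ in terms of $|u_\ell - v_j|$ using the fact that these points are confined to a disk. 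I would expect this disk-confinement estimate, together with a bookkeeping argument showing $\#E$ is preserved, to be the technical core; once it is in place, Theorem~\ref{th:main} follows by applying Theorem~\ref{th:GDM} to $G'$ and using $\#E' = \#E$ together with the monotonicity of $x \mapsto (r/\sqrt3)^{-x}$ in the right-hand side (which is why preserving, rather than merely bounding, $\#E$ matters).
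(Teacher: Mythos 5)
Your reduction to Theorem~\ref{th:GDM} hinges on the step you leave open, and that step is not just unproven but false: in general one \emph{cannot} produce a directed graph $G'$ on the roots of $P$ satisfying conditions (1)--(3), with $\#E' = \#E$, whose product of edge lengths is at most that of $G$. Take $P=(X-w)(X-u_1)(X-u_2)$ with $w=1$, $u_1=1-\varepsilon$, $u_2=1+\varepsilon e^{i(\pi/2+\varepsilon)}$ for small $\varepsilon>0$; then $|u_1|<|u_2|<|w|$, $|u_1-w|=|u_2-w|=\varepsilon$, and $|u_1-u_2|\approx\sqrt{2}\,\varepsilon$. With $E=\{\{u_1,w\},\{u_2,w\}\}$ the left-hand side is $\varepsilon^2$; orienting by modulus forces both edges into $w$ (in-degree $2$), and \emph{every} graph with two edges on these three roots that satisfies (1)--(3) must contain the edge $\{u_1,u_2\}$, so its product is at least about $\sqrt{2}\,\varepsilon^2>\varepsilon^2$. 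Your specific rule (redirect $(u_1,w)$ to $(u_1,u_2)$, since $u_2$ has the larger modulus) replaces a length-$\varepsilon$ edge by a length-$\sqrt{2}\,\varepsilon$ edge, i.e.\ the product increases. Since the right-hand side of the bound depends on $G$ only through $\#E$ and is exactly the constant of Theorem~\ref{th:GDM}, there is no slack to absorb any multiplicative loss (your fallback $|u_\ell-u_1|\le 2|u_\ell-v_j|$ would cost a factor $2$ per redirected edge), so no redirection scheme of this kind can yield the stated inequality, independently of how you handle termination.

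The paper proceeds quite differently: it does not enforce the in-degree condition but eliminates the need for it. After orienting edges by modulus as you do, the row $F(v_j)$, $F(z)=(1,z,\dots,z^{r-1})$, of the Vandermonde matrix at a vertex of in-degree $d_j$ is replaced by the divided difference $F[v_{\alpha(k_1)},\dots,v_{\alpha(k_{d_j})},v_j]$ over the tails of \emph{all} incoming edges; by Lemma~\ref{lema_dif_div_1} this row operation divides the determinant exactly by $\prod_{i}(v_j-v_{\alpha(k_i)})$, so all edge lengths are extracted at once. The modified determinant is then bounded via Hadamard's inequality, using the explicit formula of Lemma~\ref{lema_dif_div_2} and the estimate of Lemma~\ref{aux_lemm} for the row norms, and Lemma~\ref{borrowed_lemma} accounts for the multiplicities. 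That row-replacement by higher-order divided differences is the missing ingredient you would need; a graph-surgery reduction to Theorem~\ref{th:GDM} cannot be made to work at the stated constants.
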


In order to prove Theorem \ref{th:main}, we revisit the classical proofs and the new ingredient is the use of 
divided diferences to manage the cases where the assumptions in previous formulations do not hold. 

Finally, after proving Theorem \ref{th:main}, we include some remarks and applications. 

\section{Proof of the results} 

First, we recall the definition of divided diferences. 

\begin{defn} For $f:\C \to \C$ and $v_1, \dots, v_n \in \C$ with $v_i \ne v_j$ if $1 \le i < j \le n$, the 
divided difference $f[v_1, \dots, v_n] \in \C$ is defined inductively in $n$ by
$$
f[v_1] = f(v_1)
$$
if $n=1$ and 
$$
f[v_1, \dots, v_n] =\frac{f[v_1, \dots, v_{n-1}] - f[v_2, \dots, v_n]}{v_1 - v_n}
$$
if $n > 1$. 

For $F:\C \to \C^m$ given by $F(z) = (f_1(z), \dots, f_m(z))$ and $v_1, \dots, v_n \in \C$ with $v_i \ne v_j$ if $1 \le i < j \le n$, 
the 
divided difference $F[v_1, \dots, v_n]$ is difined as
$$
F[v_1, \dots, v_n] = (f_1[v_1, \dots, v_n], \dots, f_m[v_1, \dots, v_n]) \in \C^m.
$$
\end{defn}

The only properties we will use concerning divided diferences are stated
in the next two lemmas. We omit their proofs since they can both be easily done by induction on $n$. 
We refer the reader to \cite[Chapter 6]{CheKin} for further properties of divided diferences and their 
use in polynomial interpolation. 

\begin{lemma} \label{lema_dif_div_1}
For $F:\C \to \C^m$ and $v_1, \dots, v_n \in \C$ with $v_i \ne v_j$ if $1 \le i < j \le n$, 
$F[v_1, \dots, v_n]$ is the 
linear combination of $F(v_1), \dots, F(v_n)$ given by
$$
F[v_1, \dots, v_n] = \sum_{h = 1}^{n} \Big(  \prod_{k =1 \atop k\ne h}^{n}
\
\frac{1}{v_h - v_k} \ \Big) F(v_h) .
$$ 
\end{lemma}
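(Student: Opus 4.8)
The plan is to prove the formula by induction on $n$, after first reducing to the scalar case. Since both sides of the claimed identity are computed coordinatewise in $F = (f_1, \dots, f_m)$, it suffices to establish the statement for a single scalar function $f : \C \to \C$; the vector case then follows by applying the scalar result to each $f_1, \dots, f_m$ and reassembling. So from now on I would work with $f[v_1, \dots, v_n]$ and aim to show that it equals $\sum_{h=1}^n \big( \prod_{k \ne h} (v_h - v_k)^{-1} \big) f(v_h)$, where the product runs over $1 \le k \le n$ with $k \ne h$.

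For the base case $n = 1$, the definition gives $f[v_1] = f(v_1)$, while the right-hand side is a single term whose product over the empty index set is $1$; the two agree. For the inductive step, I would assume the formula for all divided differences of $n-1$ distinct points and apply it to the two $(n-1)$-point divided differences appearing in the recursive definition $f[v_1, \dots, v_n] = (f[v_1, \dots, v_{n-1}] - f[v_2, \dots, v_n])/(v_1 - v_n)$. This expresses $f[v_1, \dots, v_n]$ as a linear combination of $f(v_1), \dots, f(v_n)$, and the task reduces to checking that the coefficient of each $f(v_h)$ matches the asserted product.

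The coefficients split into three cases. The endpoint $f(v_1)$ occurs only in the first inner sum and $f(v_n)$ only in the second, and for these the verification is immediate: dividing by $v_1 - v_n$ simply supplies the one missing factor, and for $h = n$ the sign is absorbed by the reversal $v_n - v_1 = -(v_1 - v_n)$. The main point — and the only place where any genuine computation is needed — is the middle range $2 \le h \le n-1$, where $f(v_h)$ appears in both inner sums. Factoring out the common product $\prod_{2 \le k \le n-1,\, k \ne h} (v_h - v_k)^{-1}$ leaves the difference $(v_h - v_1)^{-1} - (v_h - v_n)^{-1}$, which equals $(v_1 - v_n)/((v_h - v_1)(v_h - v_n))$. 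The numerator $v_1 - v_n$ cancels the outer denominator, while the two remaining factors $(v_h - v_1)^{-1}$ and $(v_h - v_n)^{-1}$ are exactly the terms $k = 1$ and $k = n$ that were missing, so the coefficient becomes $\prod_{k \ne h} (v_h - v_k)^{-1}$ as required. I expect this telescoping cancellation to be the only delicate step; everything else is bookkeeping of index ranges. Since $v_1, \dots, v_n$ are pairwise distinct, no denominator vanishes and the manipulation is valid throughout.
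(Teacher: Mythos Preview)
Your proof is correct and follows exactly the approach the paper indicates: the paper omits the proof, stating only that it ``can both be easily done by induction on $n$,'' and your argument carries out precisely that induction, with the coordinatewise reduction to the scalar case and the three-case coefficient check (endpoints $h=1$, $h=n$, and the telescoping identity for $2 \le h \le n-1$) being the natural way to execute it.
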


\begin{lemma} \label{lema_dif_div_2}
For $p \in \N_0$, $f:\C \to \C$ given by $f(z) = z^p$, and $v_1, \dots, v_n \in \C$ with $v_i \ne v_j$ if $1 \le i < j \le n$, 
$$
f[v_1, \dots, v_n] = \left \{ \begin{array}{cl}
                                         \displaystyle{
                                         \sum_{(t_1, \dots, t_n) \in \N_0^n \atop t_1 + \dots + t_n = p-n+1} \ \prod_{j = 1}^n v_j^{t_j}} & \hbox{if } n \le p+1, \cr
                                              0 & \hbox{if } n \ge  p + 2. \cr
                                        \end{array}
\right.
$$ 
\end{lemma}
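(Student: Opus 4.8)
The final statement to prove is Lemma \ref{lema_dif_div_2}, the explicit formula for the divided difference of the monomial $f(z)=z^p$.

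\bigskip

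The plan is to prove the two cases by induction on $n$, following exactly the recursive definition of divided differences, since the formula to establish is a purely algebraic identity in $v_1, \dots, v_n$. First I would dispose of the trivial cases. For $n=1$, the definition gives $f[v_1]=f(v_1)=v_1^p$; the claimed sum ranges over $(t_1)\in\N_0$ with $t_1=p$, yielding $v_1^p$, so the base case holds (and separately, if $p=0$ the case $n\ge p+2$ already starts at $n\ge 2$). For the vanishing case $n\ge p+2$, I would argue by a downward-compatible induction: a divided difference of order $n$ of a polynomial of degree $p$ vanishes once $n>p+1$, which can be seen directly from the recursion, since once $f[v_1,\dots,v_{n-1}]$ and $f[v_2,\dots,v_n]$ are both the zero constant (when $n-1\ge p+2$) their difference is zero, and the single boundary case $n=p+2$ requires checking that the two order-$(p+1)$ divided differences coincide.

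\bigskip

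The heart of the argument is the inductive step for the main case $n\le p+1$. Assuming the formula holds for divided differences of length $n-1$, I would compute
$$
f[v_1,\dots,v_n]=\frac{f[v_1,\dots,v_{n-1}]-f[v_2,\dots,v_n]}{v_1-v_n}
$$
by substituting the inductive expressions for the two numerator terms. Both are sums of monomials $\prod v_j^{t_j}$ with exponents summing to $p-(n-1)+1=p-n+2$; the first omits the index $n$ and the second omits the index $1$. After subtracting, the monomials involving only $v_2,\dots,v_{n-1}$ (i.e. those with no $v_1$ and no $v_n$) cancel in pairs, and what survives can be grouped so that each surviving term carries a factor $v_1^{a}-v_n^{a}$ for some $a\ge 1$. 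The key manipulation is the geometric-sum factorization $v_1^{a}-v_n^{a}=(v_1-v_n)(v_1^{a-1}+v_1^{a-2}v_n+\dots+v_n^{a-1})$, which cancels the denominator $v_1-v_n$ and lowers the total degree from $p-n+2$ to $p-n+1$, matching the target. The main bookkeeping task is to check that, after this cancellation and factorization, every monomial $\prod_{j=1}^n v_j^{t_j}$ with $t_1+\dots+t_n=p-n+1$ appears exactly once with coefficient $1$.

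\bigskip

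The step I expect to be the main obstacle is precisely this last coefficient-matching: verifying that the telescoping in the numerator, combined with the geometric-sum expansion, produces each target monomial with multiplicity exactly one and no spurious cross terms. I would handle it by tracking a generic monomial $v_1^{t_1}\cdots v_n^{t_n}$ of total degree $p-n+1$ and exhibiting the unique pair of numerator monomials that collapse onto it. Since the lemma is flagged as provable by a straightforward induction, I would keep this combinatorial verification as the only detailed computation and present the rest as routine consequences of the recursion.
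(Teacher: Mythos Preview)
Your proposal is correct and follows exactly the approach the paper indicates: the paper omits the proof entirely, noting only that it ``can be easily done by induction on $n$'', which is precisely the route you outline. Your identification of the key step---cancelling the $v_1,v_n$-free monomials in the numerator and then using the factorization $v_1^{a}-v_n^{a}=(v_1-v_n)\sum_{i=0}^{a-1}v_1^{i}v_n^{a-1-i}$ to match each target monomial uniquely---is sound, and the boundary case $n=p+2$ indeed reduces to observing that both order-$(p+1)$ divided differences equal $1$ by the already-established main case.
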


We will also use the following lemma, whose proof is again omitted since it 
can be easily done by induction on $r$.

\begin{lemma}\label{aux_lemm} For $d, r \in  \N_0$ with $d \le r-1$, 
$$
\left( \sum_{i=d}^{r-1}\binom{i}{d}^2 \right)^{1/2} \ \le \ \binom{r-1}{d}\left(\frac{r+d}{2d+1}\right)^{1/2} \ \le 
\  \Big(\frac{r}{\sqrt{3}}\Big)^d r^{1/2}.
$$
\end{lemma}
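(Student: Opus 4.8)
The plan is to prove the two inequalities in the displayed chain separately: first that $\left(\sum_{i=d}^{r-1}\binom{i}{d}^2\right)^{1/2}\le \binom{r-1}{d}\left(\frac{r+d}{2d+1}\right)^{1/2}$, and then that $\binom{r-1}{d}\left(\frac{r+d}{2d+1}\right)^{1/2}\le \left(\frac{r}{\sqrt 3}\right)^{d} r^{1/2}$. Squaring throughout, the first amounts to $\sum_{i=d}^{r-1}\binom{i}{d}^2\le \binom{r-1}{d}^2\frac{r+d}{2d+1}$ and the second to $\binom{r-1}{d}^2\frac{r+d}{2d+1}\le \frac{r^{2d+1}}{3^d}$. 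I would fix $d$ and treat $r\ge d+1$ as the running variable throughout.

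For the first inequality I would induct on $r$. The base case $r=d+1$ gives equality, since both sides equal $1$. For the inductive step, since $\sum_{i=d}^{r}\binom{i}{d}^2=\sum_{i=d}^{r-1}\binom{i}{d}^2+\binom{r}{d}^2$, the induction hypothesis reduces the claim to $\binom{r-1}{d}^2\frac{r+d}{2d+1}+\binom{r}{d}^2\le \binom{r}{d}^2\frac{(r+1)+d}{2d+1}$, which after rearranging reads $\binom{r-1}{d}^2(r+d)\le \binom{r}{d}^2(r-d)$. Substituting $\binom{r}{d}=\binom{r-1}{d}\frac{r}{r-d}$ (legitimate because $r-d\ge 1$) collapses this to $(r+d)(r-d)\le r^2$, i.e. $-d^2\le 0$, which is trivially true. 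This closes the induction.

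For the second inequality a naive induction on $r$ does not close — the step inequality already fails for small $r$ — so I would argue directly via the arithmetic–geometric mean inequality. Writing $\binom{r-1}{d}=\frac{1}{d!}\prod_{j=1}^{d}(r-j)$, the quantity $\binom{r-1}{d}^2(r+d)=\frac{1}{(d!)^2}\big(\prod_{j=1}^{d}(r-j)^2\big)(r+d)$ is, up to the factor $(d!)^{-2}$, the product of the $2d+1$ positive numbers obtained by taking each $r-j$ twice together with $r+d$. Their sum is $(2d+1)r-d^2$, so their arithmetic mean is $r-\frac{d^2}{2d+1}\le r$; AM–GM then yields $\big(\prod_{j=1}^{d}(r-j)^2\big)(r+d)\le r^{2d+1}$, hence $\binom{r-1}{d}^2\frac{r+d}{2d+1}\le \frac{r^{2d+1}}{(d!)^2(2d+1)}$. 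It then remains only to absorb the constant, i.e. to check $3^d\le (d!)^2(2d+1)$, which I would verify by a one-line induction on $d$: the base $d=0$ is equality, and the step reduces to $3(2d+1)\le (d+1)^2(2d+3)$.

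The purely routine computations are the sum of those $2d+1$ numbers and the two rearrangements; the single genuine idea is the choice of multiset for AM–GM in the second inequality, which is where I expect the main difficulty to lie, precisely because the more obvious induction on $r$ breaks down there. The hypothesis $d\le r-1$ enters exactly where it is needed, namely to guarantee positivity of the factors $r-j$ for AM–GM and the validity of the identity $\binom{r}{d}=\binom{r-1}{d}\frac{r}{r-d}$ in the first step.
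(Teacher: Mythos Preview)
Your argument is correct. The paper itself omits the proof entirely, remarking only that it ``can be easily done by induction on $r$.'' Your treatment of the first inequality follows exactly this hint and closes cleanly. For the second inequality you depart from the paper's suggestion: you observe (correctly) that the naive inductive step in $r$ fails---indeed, for $d=1$ the ratio $\binom{r}{d}^2(r+1+d)\big/\binom{r-1}{d}^2(r+d)$ exceeds $(r+1)^{2d+1}/r^{2d+1}$ already at $r=2$---and you replace induction by an AM--GM bound on the $2d+1$ factors $(r-1)^2,\dots,(r-d)^2,(r+d)$, reducing the claim to the elementary estimate $3^d\le (d!)^2(2d+1)$. This is a genuinely different route from what the paper advertises; it has the advantage of being direct and of making transparent exactly where the constant $\sqrt 3$ enters, at the price of requiring the auxiliary inequality on $(d!)^2(2d+1)$. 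Whatever argument the authors had in mind for the second inequality (perhaps a strengthened induction hypothesis), yours is self-contained and complete.
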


Finally, before proving our main result, we recall \cite[Lemma 3.8]{eigen}.

\begin{lemma}\label{borrowed_lemma} If $m_1, \dots, m_r \in \N$ and $
\sum_{i = 1}^r m_i = d$, then 
$$\displaystyle\prod_{i = 1}^r m_i \le 3^{\min\{d, 2d-2r\}/3}.$$ 
\end{lemma}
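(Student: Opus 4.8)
The plan is to split the desired inequality into two independent product bounds and recover the $\min$ at the end. Concretely, I would prove separately that
$\prod_{i=1}^r m_i \le 3^{d/3}$ and $\prod_{i=1}^r m_i \le 3^{(2d-2r)/3}$, and then combine them. Since $x \mapsto 3^x$ is increasing, the conjunction of these two bounds is exactly $\prod_{i=1}^r m_i \le 3^{\min\{d,\,2d-2r\}/3}$, using $2(d-r)=2d-2r$. I would also record at the outset that the hypothesis $m_i \ge 1$ forces $d = \sum_i m_i \ge r$, so that $2d-2r \ge 0$ and both exponents are nonnegative. In this way the whole statement reduces to two global product bounds, each of which I would derive from a single per-factor inequality multiplied over $i = 1, \dots, r$.

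For the first bound I would establish the elementary scalar fact that $m \le 3^{m/3}$ for every $m \in \N$, equivalently $m^3 \le 3^m$. The cases $m = 1, 2, 3$ are checked directly ($1 \le 3$, $8 \le 9$, $27 \le 27$), and the inductive step for $m \ge 3$ follows from $(m+1)^3 = m^3(1 + 1/m)^3 \le m^3 (4/3)^3 = \tfrac{64}{27}\,m^3 \le 3 \cdot 3^m = 3^{m+1}$, since $64/27 < 3$. Multiplying $m_i \le 3^{m_i/3}$ over all $i$ then gives $\prod_i m_i \le 3^{(\sum_i m_i)/3} = 3^{d/3}$.

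For the second bound I would similarly establish $m \le 3^{2(m-1)/3}$ for every $m \in \N$, equivalently $m^3 \le 9^{m-1}$. Here $m = 1$ is an equality ($1 \le 1$), and for $m \ge 1$ the step $(m+1)^3 = m^3(1 + 1/m)^3 \le 8\,m^3 \le 9 \cdot 9^{m-1} = 9^m$ closes the induction, using $(1 + 1/m)^3 \le 2^3 = 8 \le 9$. Multiplying $m_i \le 3^{2(m_i-1)/3}$ over all $i$ yields $\prod_i m_i \le 3^{2\sum_i (m_i - 1)/3} = 3^{2(d-r)/3}$, which is the second bound.

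The only genuine content is the pair of scalar inequalities $m^3 \le 3^m$ and $m^3 \le 9^{m-1}$, so I expect the main (and modest) obstacle to be pinning down the inductive constants: checking $(4/3)^3 < 3$ in the first induction, $(1+1/m)^3 \le 8$ in the second, and treating the small base cases. These are exactly the cases where $m \le 3^{m/3}$ is tight (at $m = 3$) and where $m \le 3^{2(m-1)/3}$ is tight (at $m = 1$), which is why combining the two bounds loses no slack and produces the sharp exponent $\min\{d, 2d-2r\}$; indeed, taking all $m_i = 3$ when $d = 3r$ already attains equality. An alternative route would be to identify the maximizing tuple directly, since an exchange argument shows that, for fixed count $r$ and fixed sum $d$, the product is largest for the most balanced distribution; but that approach needs more case analysis, and I would prefer the self-contained per-factor bounds above.
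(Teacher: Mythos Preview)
Your argument is correct: both per-factor inequalities $m^3 \le 3^m$ and $m^3 \le 9^{m-1}$ hold for all $m \in \N$ by the inductions you outline, and multiplying over $i$ gives the two global bounds whose minimum is the claim. Note, however, that the paper does not supply its own proof of this lemma at all; it simply quotes the statement as \cite[Lemma 3.8]{eigen}. So there is nothing to compare against in the paper itself, and your self-contained argument would in fact be an addition rather than a reproduction.
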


We can now give the proof of our main result.

\begin{proof}{Proof of Theorem \ref{th:main}:}
Let $P(X) = a_d\prod_{j = 1}^r(X-v_j)^{m_j}\in \C[X]$ 
with $v_i \ne v_j$ if $1 \le i < j \le r$, $m_i \in \N$ for $1 \le i \le r$. 
It is easy to see that the result holds if $r=1$, so from now we suppose $r \ge 2$. 
Without loss of generality, we suppose also that $V = \{v_1, \dots, v_r\}$
and that the roots of $P$ are numbered in such a way that
$$
|v_1| \le \dots \le |v_r|.
$$
We give a direction to each edge in $E$:
if $e$ is an edge joining $v_i$ and $v_j$ with $i < j$, we
consider $e = (v_i, v_j)$ as the oriented edge going
from $v_i$ to $v_j$. Note that now $G = (V, E)$ satisfies conditions
\emph{1} and \emph{2} in Theorems \ref{th:DM} and \ref{th:GDM}.
We consider the edges in $E$ listed by
$$
e_1 =(v_{\alpha(1)}, v_{\beta(1)}), \dots, e_{\#E} =(v_{\alpha(\#E)}, v_{\beta(\#E)}).
$$
Finally, for $1 \le j \le r$, let $d_j \in \N_0$ be the in-degree of the vertex $v_j$.
Note that $d_1 = 0$ since there is no edge finishing in $v_1$, and $d_j \le r-1$ for $1 \le j \le r$.

As seen in \cite[Section 4.2]{BPR},
\begin{equation} \label{ineq_1}
\left|\sD_{d-r}(P)\right|^{1/2} 
= \left|a_d\right|^{r-1} \left( \prod_{j = 1}^r m_j \right)^{1/2}  \prod_{1 \le i < j \le r} |v_i - v_j|. 
\end{equation}
On the other hand,  
\begin{equation} \label{ineq_2}
\prod_{1 \le i < j \le r} |v_i - v_j| = | \det  W |
\end{equation}
where $W$ is the Vandermonde matrix
$$
W = \left( \begin{array}{cccc}
  1 & v_1 & \dots & v_1^{r-1} \cr
  1 & v_2 & \dots & v_2^{r-1} \cr
  \vdots & \vdots & & \vdots \cr
  1 & v_r & \dots & v_r^{r-1} \cr
             \end{array}
    \right) \in \C^{r \times r}.
$$
We consider $F:\C \to \C^r$, $F(z)=(1,z,\dots, z^{r-1})$ and define
a sequence of matrices $W_r, W_{r-1}, \dots, W_1$ in $\C^{r \times r}$. 
First, we define $W_r = W$. 
Then, for fixed
$j = r, \dots, 2$, 
once $W_j$ is defined, 
we only modify its $j$-th row (if any)
in order to define $W_{j-1}$, as follows:
we take the (possibly empty) sublist of edges $e_{k_1}, \dots, e_{k_{d_j}}$
finishing in $v_j$ and  
take as the $j$-th row of $W_{j-1}$ the divided difference
$$
F[v_{\alpha(k_1)}, \dots ,v_{\alpha(k_{d_j})},v_j].
$$
Note that the $j$-th row of $W_j$ equals the $j$-th row of $W$, which is
$F(v_j)$; and since for $1 \le i \le d_j$, $\alpha(k_i) < \beta(k_i) = j$, 
the $\alpha(k_i)$-th row of $W_j$ equals the $\alpha(k_i)$-th row of $W$, which is
$F(v_{\alpha(k_i)})$. Then, by Lemma \ref{lema_dif_div_1} we have that 
$$
\det W_j = \det W_{j-1} \, \prod_{i = 1}^{d_j} (v_j - v_{\alpha(k_i)}).
$$
In this way, we can prove by reverse induction in $j$ that for $j = r, \dots, 2$, 
$$
\det W = \det W_{j-1} \, \prod_{e \in E \atop \beta(e) \ge j} (v_{\beta(e)} - v_{\alpha(e)}),
$$
and at the end we obtain
\begin{equation} \label{ineq_3}
\det W =  \det W_1 \,  \prod_{e \in E} (v_{\beta(e)} - v_{\alpha(e)}).
\end{equation}

The next  step is to bound $|\det W_1|$ using Hadamard inequality. For $1 \le j \le r$, 
keeping the notation of the above paragraphs, the 
$j$-th row of $W_1$
is 
$
F[v_{\alpha(k_1)}, \dots ,v_{\alpha(k_{d_j})},v_j]
$
and 
by Lemma \ref{lema_dif_div_2} its norm equals
$$
\left(\sum_{i=d_j}^{r-1} 
\Big|
\sum_{(t_{1}, \dots, t_{{d_j}},t_{d_j+1}) \in \N_0^{d_j+1} \atop t_{1} + \dots + t_{{d_j}}+t_{d_j+1} = i-d_j} 
\left( \prod_{l = 1}^{d_j} v_{\alpha(k_{l})}^{t_{{l}}} \right) v_j^{t_{d_j+1}} 
\Big|^2 \right)^{1/2}
\leq 
$$
$$
\leq
\left(\sum_{i=d_j}^{r-1}
\binom{i}{d_j}^2 |v_j|^{2(i-d_j)} \right)^{1/2}
\leq
\left(\sum_{i=d_j}^{r-1} \binom{i}{d_j}^2\right)^{1/2} \max \{ 1, |v_j| \}^{r-1-d_j} \le 
$$
$$
\le \Big(\frac{r}{\sqrt{3}}\Big)^{d_j} r^{1/2} \max \{ 1, |v_j| \}^{r-1-d_j}
$$
by Lemma \ref{aux_lemm}. 
By Hadamard inequality, 
\begin{equation} \label{ineq_4}
| \det W_1 | \le \prod_{j = 1}^r 
 \Big(\frac{r}{\sqrt{3}}\Big)^{d_j} r^{1/2}  \max \{ 1, |v_j| \}^{r-1-d_j}  
 =  \Big(\frac{r}{\sqrt{3}}\Big)^{\#E} r^{r/2} \prod_{j = 1}^r\max \{ 1, |v_j| \}^{r-1-d_j}.
\end{equation}

Finally, using equations (\ref{ineq_1}), (\ref{ineq_2}), (\ref{ineq_3}), (\ref{ineq_4}) and 
Lemma \ref{borrowed_lemma},
$$
\prod_{(v_i, v_j) \in E} |v_i - v_j| = 
\prod_{e \in E} |v_{\beta(e)} - v_{\alpha(e)}| = |\det W| |\det(W_1)|^{-1} \ge 
$$
$$
\ge
\left|\sD_{d-r}(P)\right|^{1/2} 
\left|a_d\right|^{-(r-1)}  \left( \prod_{j = 1}^r\max \{ 1, |v_j| \}^{-(r-1-d_j)} \right)
\Big(\frac{r}{\sqrt{3}}\Big)^{-\#E}r^{-r/2} \left( \prod_{j = 1}^r m_j \right)^{-1/2}\ge
$$
$$
\ge |{\rm sDisc}_{d-r}(P)|^{1/2} \, {\rm M}(P)^{-(r-1)} \, \Big(\frac{r}{\sqrt{3}}\Big)^{-\#E} \,r^{-r/2} \, 
\Big(\frac13\Big)^{\min \{d, 2d-2r\}/6}
$$
as we wanted to prove.
\end{proof}

We include below some remarks considering cases in which the bound in Theorem \ref{th:main} can be 
slightly improved.

\begin{remark}\label{rem:degree}
Following the notation in Theorem \ref{th:main}, for $1 \le j \le r$ let $\tilde d_j$ be the total degree of 
vertex $v_j$ and let $\tilde d = \min \{\tilde d_j \, | \, 1 \le j \le r\}$. If $P$ is a monic polynomial 
then 
$$
\prod_{(v_i, v_j) \in E} |v_i - v_j| \ge |{\rm sDisc}_{d-r}(P)|^{1/2} \, {\rm M}(P)^{-(r-1-\frac12 \tilde d)} \, \Big(\frac{r}{\sqrt{3}}\Big)^{-\#E} \,r^{-r/2} \, 
\Big(\frac13\Big)^{\min \{d, 2d-2r\}/6}.
$$
Indeed, 
taking into account that $|v_{\alpha(e)}| \le |v_{\beta(e)}|$ for every $e \in E$, we
change the last part of the proof of Theorem \ref{th:main} as follows: 
$$
\prod_{(v_i, v_j) \in E} |v_i - v_j| = 
\prod_{e \in E} |v_{\beta(e)} - v_{\alpha(e)}| = |\det W| |\det(W_1)|^{-1} \ge 
$$
$$
\ge
\left|\sD_{d-r}(P)\right|^{1/2} 
\left( \prod_{j = 1}^r\max \{ 1, |v_j| \}^{-(r-1-d_j)} \right)
\left(\prod_{e \in E}\frac{\max\{1, |v_{\alpha(e)}|\}^{1/2}}{\max\{1, |v_{\beta(e)}|\}^{1/2}}\right) 
\Big(\frac{r}{\sqrt{3}}\Big)^{-\#E}r^{-r/2} \left( \prod_{j = 1}^r m_j \right)^{-1/2}\ge
$$
$$
\ge
\left|\sD_{d-r}(P)\right|^{1/2} 
\left( \prod_{j = 1}^r\max \{ 1, |v_j| \}^{-(r-1-\frac12 \tilde d_j)} \right)
\Big(\frac{r}{\sqrt{3}}\Big)^{-\#E}r^{-r/2} \Big(\frac13\Big)^{\min \{d, 2d-2r\}/6}
\ge
$$
$$
\ge |{\rm sDisc}_{d-r}(P)|^{1/2} \, {\rm M}(P)^{-(r-1-\frac12 \tilde d)} \, \Big(\frac{r}{\sqrt{3}}\Big)^{-\#E} \,r^{-r/2} \, 
\Big(\frac13\Big)^{\min \{d, 2d-2r\}/6}.
$$
\end{remark}

The next remark considers the case
where
a number of small distances is guaranteed by some extra information (possibly coming from numerical computations). 
It could be particularly useful to bound the minimial distance between different roots, taking $E$ as 
the set with only one edge joining a pair of closest roots.

\begin{remark}
Following the notation in Theorem \ref{th:main}, suppose that $r > 2$ and
that there exist at least $k$ distinct pairs of roots 
$(v_{\gamma(1)}, v_{\delta(1)}), \dots, (v_{\gamma(k)}, v_{\delta(k)})$
whose distance is less than $\frac{\sqrt{3}}{r}$
(not necesarily these pairs of roots should be connected by edges in $E$). 
For $1 \le i \le k$, let $\Delta_i  > 0$ such that 
$$
|v_{\gamma(i)} - v_{\delta(i)}| \le \Big(\frac{\sqrt{3}}{r}\Big)^{1 + \Delta_i}
$$
and renumber these pairs such that
$$
\Delta_1 \ge \dots \ge \Delta_k. 
$$
Then, if $\#E < k$, 
$$
\prod_{(v_i, v_j) \in E} |v_i - v_j| \ \ge \ |{\rm sDisc}_{d-r}(P)|^{1/2}  {\rm M}(P)^{-(r-1)} 
\Big(\frac{r}{\sqrt{3}}\Big)^{-\#E + \Delta_{\#E + 1} + \cdots + \Delta_k} r^{-r/2}  
\Big(\frac13\Big)^{\min \{d, 2d-2r\}/6}.
$$
Indeed, suppose that 
$$0 < \omega_1 \le \dots \le \omega_{\binom{r}2}$$
are the ordered distances between pairs of roots of $P$. 
By the assumptions, 
for $1 \le i \le k$ there are at least $i$ distances less than or equal to $\Big(\frac{\sqrt{3}}{r}\Big)^{1 + \Delta_i}$
and then we have that 
 $\omega_i \le \Big(\frac{\sqrt{3}}{r}\Big)^{1 + \Delta_i}$.
Consider $\tilde E$ the set of $k$ edges whose lengths are $\omega_1, \dots, \omega_k$.
Then, applying the bound in Theorem \ref{th:main} to $G = (V, \tilde E$) we obtain
$$
\prod_{(v_i, v_j) \in E} |v_i - v_j| \ge \prod_{i = 1}^{\#E} \omega_i
= \Big(\prod_{(v_i, v_j) \in \tilde E} |v_i - v_j|\Big) \Big(\prod_{i = \#E + 1}^k \omega_i^{-1}\Big) 
\ge
$$
$$
\ge
|{\rm sDisc}_{d-r}(P)|^{1/2} \, {\rm M}(P)^{-(r-1)} \, \Big(\frac{r}{\sqrt{3}}\Big)^{-k} r^{-r/2}  
\Big(\frac13\Big)^{\min \{d, 2d-2r\}/6} \Big(\prod_{i = \#E + 1}^k \Big(\frac{r}{\sqrt{3}}\Big)^{1 + \Delta_i} \Big) =
$$
$$
=
 |{\rm sDisc}_{d-r}(P)|^{1/2}  {\rm M}(P)^{-(r-1)} 
\Big(\frac{r}{\sqrt{3}}\Big)^{-\#E + \Delta_{\#E + 1} + \cdots + \Delta_k} r^{-r/2}  
\Big(\frac13\Big)^{\min \{d, 2d-2r\}/6}.
$$
\end{remark}

Finally, as an application of Theorem \ref{th:main}, we give a simplified proof of \cite[Theorem 9]{KerSag}
with smaller constants. 

\begin{theorem}\label{th:appl}
Let $P \in \C[X]$ be a polynomial of degree $d$ with exactly $r \ge 2$ distinct complex roots
and let $V = \{v_1, \dots, v_r\} \subset \C$ be the set of roots.
For any root $v$ of $P$, we denote by ${\rm sep}(P, v)$ the distance from $v$ to (one of) its closest different root of $P$.
Then, for any $V' \subset V$, 
$$
\prod_{v \in V'}{\rm sep}(P, v) \ge
|{\rm sDisc}_{d-r}(P)| \, {\rm M}(P)^{-2(r-1)}  \Big(\frac{r}{\sqrt{3}}\Big)^{-\#V'} \,r^{-r} \, 
\Big(\frac13\Big)^{\min \{d, 2d-2r\}/3}.
$$
\end{theorem}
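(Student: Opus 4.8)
The plan is to realize $\prod_{v\in V'}\mathrm{sep}(P,v)$ as a product of edge lengths over a suitable graph and then apply Theorem \ref{th:main} directly. For each $v\in V'$, pick a root $v'$ of $P$ realizing $\mathrm{sep}(P,v)$, i.e.\ with $v'\ne v$ and $|v-v'|=\mathrm{sep}(P,v)$, and let $E$ be the set of (undirected) edges $\{v,v'\}$ obtained this way, one for each $v\in V'$. Several subtleties must be handled before counting: the same unordered pair may arise from two different vertices of $V'$ (when $v,v'\in V'$ are mutually closest), so the edge set $E$ could have fewer than $\#V'$ elements, and correspondingly the product $\prod_{v\in V'}\mathrm{sep}(P,v)$ may count a single distance $|v-v'|$ twice. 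I would treat this by working with the multiset of chosen pairs rather than insisting on a simple graph.

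The cleanest route is: first observe that for every $v\in V'$ we have $\mathrm{sep}(P,v)\le \omega_1\le\cdots$, but more usefully, that the multiset $\{\mathrm{sep}(P,v)\}_{v\in V'}$ is dominated termwise (after sorting increasingly) by the sorted list of the $\#V'$ smallest pairwise distances $\omega_1\le\omega_2\le\cdots\le\omega_{\#V'}$ among roots of $P$. Indeed each value $\mathrm{sep}(P,v)$ equals some pairwise distance $\omega_{j}$, and no pairwise distance index is used ``more than twice'' in the list $\{\mathrm{sep}(P,v)\}_{v\in V'}$ (an edge $\{a,b\}$ can only be selected by $a$ and by $b$); a short combinatorial argument then gives $\prod_{v\in V'}\mathrm{sep}(P,v)\ge \prod_{i=1}^{\#V'}\omega_i$. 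Next, let $\tilde E$ be a simple graph consisting of $\#V'$ edges realizing the lengths $\omega_1,\dots,\omega_{\#V'}$ (these are genuinely distinct pairs, so $\tilde E$ is a legitimate edge set on $V$). Apply Theorem \ref{th:main} to $(V,\tilde E)$:
$$
\prod_{i=1}^{\#V'}\omega_i \;=\!\!\prod_{(v_i,v_j)\in\tilde E}\!\!|v_i-v_j|
\;\ge\; |{\rm sDisc}_{d-r}(P)|^{1/2}\,{\rm M}(P)^{-(r-1)}\,\Big(\frac{r}{\sqrt3}\Big)^{-\#V'}\,r^{-r/2}\,\Big(\frac13\Big)^{\min\{d,2d-2r\}/6}.
$$

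Finally I would square this inequality. Since $\mathrm{sep}(P,v)\ge 0$ and we have shown $\prod_{v\in V'}\mathrm{sep}(P,v)\ge \prod_{i=1}^{\#V'}\omega_i\ge 0$, squaring the first inequality and combining with the square of the displayed bound yields
$$
\prod_{v\in V'}\mathrm{sep}(P,v)\;\ge\;\Big(\prod_{i=1}^{\#V'}\omega_i\Big)^{2}
\;\ge\; |{\rm sDisc}_{d-r}(P)|\,{\rm M}(P)^{-2(r-1)}\,\Big(\frac{r}{\sqrt3}\Big)^{-\#V'}\wait\!\!,
$$
wait --- the exponent of $(r/\sqrt3)$ needs care: squaring the Theorem \ref{th:main} bound gives $(r/\sqrt3)^{-2\#V'}$, not $(r/\sqrt3)^{-\#V'}$, so I would instead apply Theorem \ref{th:main} to a graph on $\#V'$ edges and use that $\prod_{v\in V'}\mathrm{sep}(P,v)$ is a product of $\#V'$ quantities each $\ge$ its matching $\omega_i$, giving $\prod \mathrm{sep}\ge \prod\omega_i$ \emph{and} separately $\prod\mathrm{sep}\ge\prod\omega_i$ from a second grouping, whose product is $(\prod\omega_i)^2$. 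Then $r^{-r}$, ${\rm M}(P)^{-2(r-1)}$, $(1/3)^{\min/3}$ all come out right, while $(r/\sqrt3)^{-2\#V'}$ must be weakened to $(r/\sqrt3)^{-\#V'}$ using $r/\sqrt3\ge 1$ (valid since $r\ge 2$). That last monotonicity step, and carefully justifying the ``each pairwise distance used at most twice'' bookkeeping so that $\prod_{v\in V'}\mathrm{sep}(P,v)\ge(\prod_{i=1}^{\#V'}\omega_i)$ holds with the \emph{full} exponent, is the main obstacle; everything else is a direct invocation of Theorem \ref{th:main}.
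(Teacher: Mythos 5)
There is a genuine gap, in fact two. First, your key combinatorial claim is false: it is not true that, after sorting increasingly, $\mathrm{sep}(P,v)$ for $v\in V'$ dominates $\omega_i$ termwise, nor that $\prod_{v\in V'}\mathrm{sep}(P,v)\ge\prod_{i=1}^{\#V'}\omega_i$. Take two roots $a,b$ that are mutually closest, at distance $\omega_1=\varepsilon$, with all other pairwise distances of order $1$, and $V'=\{a,b\}$; then $\prod_{v\in V'}\mathrm{sep}(P,v)=\varepsilon^2$, while $\omega_1\omega_2\approx\varepsilon$, so the inequality fails badly for small $\varepsilon$. Since a single pairwise distance can be selected by both of its endpoints, the correct consequence of your ``used at most twice'' bookkeeping is only $\mathrm{sep}_{(i)}\ge\omega_{\lceil i/2\rceil}$, not $\mathrm{sep}_{(i)}\ge\omega_i$. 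Second, your proposed repair of the exponent is in the wrong direction: since $r/\sqrt{3}>1$ for $r\ge 2$, we have $(r/\sqrt{3})^{-2\#V'}<(r/\sqrt{3})^{-\#V'}$, so a lower bound containing the factor $(r/\sqrt{3})^{-2\#V'}$ (which is what squaring a single application of Theorem \ref{th:main} to a graph with $\#V'$ edges produces) is strictly weaker than the stated bound and cannot be ``weakened'' into it by monotonicity.

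The missing idea is to apply Theorem \ref{th:main} \emph{twice}, to two edge sets whose cardinalities add up to $\#V'$, rather than once followed by squaring. This is what the paper does: form the multiset of edges $(v,\tilde v)$ for $v\in V'$, where $\tilde v$ is a closest root to $v$; each edge occurs at most twice, so the multiset splits into the simple edge set $E_0$ (all edges that occur) and the simple edge set $E_1$ (those occurring twice), with $\#E_0+\#E_1=\#V'$ and $\prod_{v\in V'}\mathrm{sep}(P,v)=\bigl(\prod_{(v_i,v_j)\in E_0}|v_i-v_j|\bigr)\bigl(\prod_{(v_i,v_j)\in E_1}|v_i-v_j|\bigr)$. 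Multiplying the two instances of Theorem \ref{th:main} yields exactly $|{\rm sDisc}_{d-r}(P)|$, ${\rm M}(P)^{-2(r-1)}$, $r^{-r}$, $(1/3)^{\min\{d,2d-2r\}/3}$, and $(r/\sqrt{3})^{-\#E_0-\#E_1}=(r/\sqrt{3})^{-\#V'}$, with no lossy step. (Your corrected inequality $\mathrm{sep}_{(i)}\ge\omega_{\lceil i/2\rceil}$ could also be made to work by applying Theorem \ref{th:main} to the graphs of the $\lceil\#V'/2\rceil$ and $\lfloor\#V'/2\rfloor$ shortest edges and multiplying, but as written your argument does not reach the stated bound.)
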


\begin{proof}{Proof:}
For each $v \in V$, we take $\tilde v$ as (one of) its closest different root of $P$.
We consider the multigraph $G=(V,E)$ where $E$ is the multiset of 
edges of type $(v, \tilde v)$ with $v \in V'$. Note that each edge in $E$ can occur at most $2$ times (one for each of its vertex). 
We divide $E$ in two sets $E_0$ and $E_1$, with $E_0$ having all the elements in $E$ and $E_1$ having the elements that ocur twice in $E$. 
Applying Theorem \ref{th:main} to $(V, E_0)$ and $(V, E_1)$ and taking into account that $\#E_0 + \#E_1 = \#V'$, we obtain 
$$
\prod_{v \in V'}{\rm sep}(P, v) =  \Big( \prod_{(v_i, v_j) \in E_0}|v_i - v_j|\Big)
\Big( \prod_{(v_i, v_j) \in E_1}|v_i - v_j|\Big)
\ge
$$
$$
\ge
|{\rm sDisc}_{d-r}(P)| {\rm M}(P)^{-2(r-1)}  \Big(\frac{r}{\sqrt{3}}\Big)^{-\#V'} \,r^{-r} \, 
\Big(\frac13\Big)^{\min \{d, 2d-2r\}/3}
$$
as we wanted to prove.
\end{proof}

\end{document}